\newcommand\lhead{H. Vogt, J. Voigt}
\newcommand\rhead{Sequences of sectorial forms}
\numberwithin{equation}{section}
\newtheorem{theorem}{Theorem}[section]
\newtheorem{proposition}[theorem]{Proposition}
\theoremstyle{definition}
\newtheorem{remarks}[theorem]{Remarks}
\newtheorem{example}[theorem]{Example}
 \mathchardef\ordinarycolon\mathcode`\:
\newcommand\rlim{
\mathchoice{\vcenter{\hbox{${\scriptstyle{+}}$}}}
{\vcenter{\hbox{$\scriptstyle{+}$}}}
{\vcenter{\hbox{$\scriptscriptstyle{+}$}}}
{\vcenter{\hbox{$\scriptscriptstyle{+}$}}}}
\newcommand\smid{\nonscript \mskip2mu plus2mu {\mid}%
\nonscript \mskip2mu plus2mu}     
\newcommand\scpr[2]{{(#1\smid#2)}}
\def\bigscpr(#1,#2){{\left(#1\nonscript \mskip2mu plus2mu \middle| \nonscript 
\mskip2mu
plus2mu#2\right)}}
\newcommand*\id{\operatorname{id}}
\newcommand\ran{R}
\renewcommand\ran{\operatorname{\rm ran}}
\newcommand*\ex{\exists\+}
\newcommand*\scdot{\mkern2mu{\cdot}\mkern2mu}
\newcommand*\textstar[1]{\raise0.1ex\hbox{$\scriptstyle#1\mkern-.5mu$}}
\newcommand*\tstar{{\textstar{*}}}
\renewcommand{\Re}{\operatorname{Re}}
\newcommand\imu{{\rm i}}
\renewcommand\phi{\varphi}
\renewcommand\epsilon{\varepsilon}
\newcommand*\dist{\operatorname{dist}}
\newcommand{\R}{\mathbb{R}\nonscript\hskip.03em}
\newcommand{\N}{\mathbb{N}\nonscript\hskip.03em}
\newcommand{\C}{\mathbb{C}\nonscript\hskip.03em}
\newcommand\cA{\mathcal A}
\newcommand\cL{\mathcal L}
\newcommand\la{\lambda}
\newcommand*\tj{\tilde{\jmath}\+}
\newcommand*\tk{\tilde k}
\newcommand*\ta{\tilde a}
\newcommand\tmo{^{-1}}
\newcommand\toh{^{1/2}}      
\newcommand\norm[1]{\|#1\|}
 \newcommand\ca{\check a}
 \newcommand\cu{\check u}
\DeclareFontFamily{U}{mathx}{\hyphenchar\font45}
\DeclareFontShape{U}{mathx}{m}{n}{
      <5> <6> <7> <8> <9> <10>
      <10.95> <12> <14.4> <17.28> <20.74> <24.88>
      mathx10
      }{}
\DeclareSymbolFont{mathx}{U}{mathx}{m}{n}
\DeclareMathAccent{\widecheck}{0}{mathx}{"71}
\DeclareMathAccent{\widecheck}{0}{mathx}{"71}
\newcommand*\wcVhelper{\rlap{$\widecheck{\rule{0pt}{1.5ex}\rule{0.83em}{0pt}}$}V}
\newcommand*\wcV{\mathchoice{\wcVhelper}{\wcVhelper}{\widecheck V}{\widecheck V}}
\newcommand*\embed{\hookrightarrow}
\newcommand*\ccdotp{{\cdot}\mkern1mu}
\newcommand\dupa[2]{\left\langle #1, #2 \right\rangle}
\newcommand\eul{{\rm e}}
\def\formE(#1,#2){\sum_{e\in E}\int_{a_e}^{b_e} #1_e'(x)\ol{#2_e'(x)}\,dx}
\let\qedhere@ams\qedhere
\def\qedhere{\@ifnextchar[{\@qedhere}{\qedhere@ams}}
\def\@qedhere[#1]{\tag*{\raisebox{-#1ex}{\qedhere@ams}}}
\def\env@cases{%
  \let\@ifnextchar\new@ifnextchar
  \left\lbrace
  \def\arraystretch{1.1}%
  \array{@{\,}l@{\quad}l@{}}%
}
\renewcommand\section{\@startsection {section}{1}{\z@}%
                                     {-3.25ex \@plus -1ex \@minus -.2ex}%
                                     {1.5ex \@plus.2ex}%
                                     {\normalfont\large\bfseries}}
\newcommand\restrict{\vphantom f\mskip1mu\vrule\mskip2mu}
\newcommand\set[2]{\bigl\{#1{;}\penalty300\;#2\bigr\}}
\newcommand\bset[2]{\Bigl\{#1{;}\;#2\Bigr\}}
\newcommand\ol{\overline}
\newcommand\dom{\mathop{\rm dom}}
\newcommand\+{\mkern1.5mu}
\newcommand\<{\mkern-1.5mu}
\newcommand\rfrac[2]{\tfrac{#1}{\raisebox{0.1em}{$\scriptstyle#2$}}}
\newcommand\comp{{\textnormal c}}
\newcommand\Cci{{\displaystyle 
C_{\raise0.2ex\hbox{$\scriptstyle\comp$}}^\infty}}
\renewcommand\le{\leqslant}
\renewcommand\ge{\geqslant}
\newcommand{\from}{{:}\;}
\renewcommand{\from}{\colon}
\newcommand\sse{\subseteq}
\newcommand\di{\mathclose{}\,\mathrm{d}}
\newcommand\slim{\mathop{\rm s\kern.08em\mbox{\rm -}lim}} 
\newcommand\abstracttext{\noindent
We present a form convergence theorem for sequences of sectorial forms and their associated semigroups in a complex Hilbert space. Roughly speaking, the approximating forms $a_n$ are all `bounded below' by the limiting form $a$, but in contrast to the  previous literature there is no monotonicity hypothesis on the sequence. Moreover, the forms are not supposed to be closed or densely defined.

For a sectorial form one obtains an associated linear relation, whose negative generates a degenerate strongly continuous semigroup of linear operators. Our hypotheses on the sequence of forms imply strong resolvent convergence of the associated linear relations, which in turn implies convergence of the corresponding semigroups. The result is illustrated by two examples, one of them closely related to the Galerkin method of numerical analysis.
\vspace{8pt}

\noindent
MSC 2010: 47A07, 47B44, 47D06
\vspace{2pt}

\noindent
Keywords: sectorial form, strong resolvent convergence, degenerate strongly continuous semigroup, m-sectorial operator
}
\begin{document}

\title{On sequences of sectorial forms converging `from above'}

\author{Hendrik Vogt and
        J\"urgen Voigt}
        
\date{}

\maketitle

\begin{abstract}
\abstracttext
\end{abstract}

\medskip
\emph{Dedicated to Jerry Goldstein on the occasion of his 80th
birthday}

\section{Introduction}
\label{intro}

The history of form convergence theorems goes back to at least the 1950's. The setup we consider involves a sequence $(a_n)$ of sectorial forms in a Hilbert space where all $a_n$ are `bounded below' in a suitable sense by a sectorial form~$a$. Then one seeks conditions implying that the operators $A_n$ associated with $a_n$ converge to the operator $A$ associated with $a$. We refer to Kato's book \cite[Chap.~VIII, Theorem~3.6]{Kato1980} for a fundamental result concerning this topic as well as for an account of the previous history. 

A special case of this kind of results involves decreasing sequences
of symmetric forms; an interesting feature is that then one does not need to specify the limiting form $a$ in advance. Actually, this is a touchy issue because the limiting form obtained pointwise need not be closable; see the cautious formulation in \cite[Chap.~VIII, Theorem~3.11]{Kato1980}. This issue was resolved by Simon in \cite[Theorem~3.2]{Simon1978}, where the `regular part' of non-closable symmetric forms was introduced. 

It appears that the topic of convergence `from above' for the more general case of sequences of sectorial forms was taken up only later, by Arendt and ter Elst \cite{ArendtElst2012}. One of the new features in that paper is that forms and the association of operators are treated in a different setup, where closedness and closability of forms -- which played an important role in the previous treatments -- are no longer relevant. 

We add a comment on form convergence theorems for
\emph{increasing} sequences of forms. This topic is closely related,
but employs rather different tools; we refer to 
\cite[Chap.~VIII, Theorem~3.13]{Kato1980}, \cite[Theorems~3.1 and 4.1]{Simon1978},
\cite[Theorem~5]{Ouhabaz1995}, \cite[Theorems~1.2, 2.2 and~3.2]{BattyElst2014}, 
\cite[Theorems~4.1 and~5.1]{VogtVoigt2020}. An essential feature in these results is that the domains of the sequence $(a_n)$ of forms satisfy $\dom(a_n)\supseteq\dom(a_{n+1})$ for all $n\in\N$.
\smallskip

Our theorem stated below is a generalisation of 
\cite[Chap.~VIII, Theorem~3.6]{Kato1980} and \cite[Theorem~3.7]{ArendtElst2012}. We refer to Remark~\ref{rems-orm-conv-fa}(c) for a discussion of the differences between this theorem and the previous results.
 
\begin{theorem}\label{thm-form-conv-fa}
Let $H$ be a complex Hilbert space, and let $(a,j)$  be a quasi-sectorial form 
in $H$. For $n\in\N$ let $a_n$ be a form with $\dom(a_n)\sse\dom(a)$. Let 
$\theta\in [0,\frac\pi 2)$, and assume that
\begin{equation}\label{eq-unif-est}
a_n(u)-a(u)\in\ol{\Sigma_\theta}\qquad \bigl(u\in\dom(a_n),\ 
n\in\N\bigr).
\end{equation}
Let $D$ be a core for $a$, and suppose that for all $u\in D$
there exists a sequence $(u_n)$ in $\dom(a)$, $u_n\in\dom(a_n)$ for all $n\in\N$, such that
$u_n\to u$ in $\dom(a)$ and $a_n(u_n)-a(u_n) \to 0$ as $n\to\infty$.

Let $A$ be the linear relation associated with $(a,j)$, 
and let $A_n$ be the 
linear relation associated with $(a_n,j\restrict_{\dom(a_n)})$, for $n\in\N$. 
Then $(A_n)$ converges to $A$ in the strong resolvent sense, 
i.e.~$(\la+A_n)\tmo\to(\la+A)\tmo$ ($n\to\infty$) strongly
for all $\la>-\gamma$, where $\gamma$ is a vertex of $(a,j)$. 
\end{theorem}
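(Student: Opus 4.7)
The plan is to run a weak compactness / limit identification argument in the form Hilbert space $V$ associated with $a$. Fix $f\in H$ and $\la>-\gamma$. For each $n$, pick a representative $\tilde u_n\in\dom(a_n)$ of $(\la+A_n)\tmo f$, so that
\[
 a_n(\tilde u_n,v)+\la\scpr{j(\tilde u_n)}{j(v)}=\scpr{f}{j(v)}
 \qquad(v\in\dom(a_n)),
\]
and the goal is to show $j(\tilde u_n)\to(\la+A)\tmo f$ strongly in $H$.

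\emph{A priori bound.} Testing with $v=\tilde u_n$, taking real parts, and splitting $a_n=a+(a_n-a)$, I would use $\Re(a_n-a)(\tilde u_n)\ge 0$ from \eqref{eq-unif-est}, the vertex inequality $\Re a(u)\ge\gamma\|j(u)\|^2$, and the strict inequality $\la+\gamma>0$ to bound, uniformly in $n$, the quantities $\|j(\tilde u_n)\|$, $\Re a(\tilde u_n)-\gamma\|j(\tilde u_n)\|^2$, and $\Re(a_n-a)(\tilde u_n)$. Hence $(\tilde u_n)$ is bounded in $V$, and some subsequence $\tilde u_{n_k}$ converges weakly in $V$ to a limit $\tilde u\in V$.

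\emph{Identification of the limit.} For any $w\in D$, pick the approximating sequence $w_n\in\dom(a_n)$ granted by the hypothesis, so that $w_n\to w$ in $V$ and $\Re(a_n-a)(w_n)\to 0$. The Cauchy--Schwarz inequality for the sectorial sesquilinear form $a_n-a$ gives
\[
 |(a_n-a)(\tilde u_n,w_n)|^2\le C_\theta\,\Re(a_n-a)(\tilde u_n)\cdot\Re(a_n-a)(w_n)\to 0,
\]
since the first factor is bounded by the a priori estimate. Sesquilinear continuity with one factor weakly and one strongly converging lets me pass to the limit in $a_n(\tilde u_n,w_n)+\la\scpr{j(\tilde u_n)}{j(w_n)}=\scpr{f}{j(w_n)}$ and conclude $a(\tilde u,w)+\la\scpr{j(\tilde u)}{j(w)}=\scpr{f}{j(w)}$. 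The core property of $D$ extends this to all $w\in\dom(a)$, giving $j(\tilde u)=(\la+A)\tmo f$.

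\emph{Strong convergence -- the hard step.} The main obstacle is upgrading weak convergence in $V$ to strong convergence of $j(\tilde u_n)$ in $H$, since $j$ need not be compact. Setting $b:=a-\gamma\scpr{j(\cdot)}{j(\cdot)}$ (sectorial with vertex $0$), the resolvent equation tested with $v=\tilde u_n$ reads
\[
 \Re b(\tilde u_n)+\Re(a_n-a)(\tilde u_n)+(\la+\gamma)\|j(\tilde u_n)\|^2
 =\Re\scpr{f}{j(\tilde u_n)},
\]
and the analogous identity at the limit is $\Re b(\tilde u)+(\la+\gamma)\|j(\tilde u)\|^2=\Re\scpr{f}{j(\tilde u)}$. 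The right-hand side of the first converges to $\Re\scpr{f}{j(\tilde u)}$; on the left, each of the three nonnegative terms has $\liminf$ at least the corresponding value at $\tilde u$, by weak lower semicontinuity of $\Re b$ on $V$, weak lower semicontinuity of $\|j(\cdot)\|^2$ on $V$, and nonnegativity of $\Re(a_n-a)$. Since the sum of those lower bounds already exhausts the common limit, each $\liminf$ must equal the corresponding limit; in particular $\|j(\tilde u_n)\|\to\|j(\tilde u)\|$, which combined with $j(\tilde u_n)\rightharpoonup j(\tilde u)$ in $H$ forces strong convergence. As $(\la+A)\tmo f$ is uniquely determined, the subsequence argument yields convergence of the whole sequence, completing the proof.
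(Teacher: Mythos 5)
Your argument is sound in substance but takes a genuinely different route from the paper. The paper proves a quantitative C\'ea-type estimate (Proposition~\ref{prop-cea-extended}) bounding $\norm{u-J\cu}_V^2$ by an infimum of approximation errors, recasts it via Lax--Milgram operators as \eqref{eq-galerkin-conv-rate-spelled-out}, and deduces the strong convergence $J_n\tilde\cA_n\tmo J_n'\to\tilde\cA\tmo$ directly from the approximation hypothesis; composing with $\tj$ and $\tilde k$ gives \eqref{eq-sr-conv}. Your weak-compactness scheme --- a priori bound, extraction of a weak limit, identification via the sectorial Cauchy--Schwarz inequality for $a_n-a$ (the same inequality \cite[VI.1, (1.15)]{Kato1980} that the paper uses in \eqref{bn-est}), and the energy-identity argument forcing $\norm{j(\tilde u_n)}\to\norm{\tj(\tilde u)}$ and hence strong convergence in $H$ --- is softer and non-quantitative, but each step checks out, including the uniform bounds on $\norm{j(\tilde u_n)}$, $\Re a(\tilde u_n)-\gamma\norm{j(\tilde u_n)}^2$ and $\Re(a_n-a)(\tilde u_n)$, and the subsequence principle at the end. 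What the paper's route buys in exchange is the explicit error bound, which immediately yields the norm-resolvent refinement of Theorem~\ref{thm-form-conv-fa-j-comp} when $j$ is compact; your argument would need extra work there.

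One technical point needs repair. Since the forms are not assumed closed or even closable, you cannot in general ``pick a representative $\tilde u_n\in\dom(a_n)$ of $(\la+A_n)\tmo f$'': the solution of the variational problem lives in the abstract completion $V_n$ of $(\dom(a_n),\scpr\cdot\cdot_{a_n,j})$, of which $\dom(a_n)$ is only a dense subspace via $q_n$, and the semi-inner product may be degenerate. Accordingly, ``weak convergence in $V$'' must be read for the images $J_n\check u_n$ under the connecting maps $J_n\in\cL(V_n,V)$ (contractions, by \eqref{eq-unif-est}), and $a$ and $a_n-a$ must be replaced by their bounded extensions $\tilde a$ and $\tilde a_n-\tilde a(J_n\ccdotp,J_n\ccdotp)$, whose sectoriality follows from \eqref{eq-unif-est} by density. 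All of your estimates transfer verbatim to this setting, so this is bookkeeping rather than a gap of substance; but in a theorem whose point is precisely to handle non-closed, non-densely defined forms, the bookkeeping has to be done.
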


Clearly, this statement of the theorem asks for quite a number of clarifications. These will be given in Section~\ref{sec-prelims}.

We point out two important issues concerning our result. Firstly, there is no kind of monotonicity required for the sequence 
$(a_n)$; see 
Remark~\ref{rems-orm-conv-fa}(c). Secondly, the result implies that the sequence of semigroups generated by the linear relations $-A_n$ converges to the semigroup generated by $-A$; see Section~\ref{sec-conv-dscsg}.

In Section~\ref{sec-prelims} we explain the concept of (not necessarily densely defined) quasi-sectorial forms and their associated linear relations.

Section~\ref{sec-proof} is devoted to the proof of Theorem~\ref{thm-form-conv-fa}, including preparations. We also include some comments on the hypotheses of the theorem as well as an additional result concerning norm convergence of the resolvents in 
Theorem~\ref{thm-form-conv-fa}.

In Section~\ref{sec-conv-dscsg} we describe the degenerate strongly continuous semigroups associated with the forms $a_n$ in the theorem and show that these semigroups converge to the semigroup associated with $a$.

Section~\ref{sec-examples} contains examples illustrating the main theorem.

\section{Sectorial forms and associated linear relations}
\label{sec-prelims}

The topic of this section is a review of sesquilinear forms and their associated linear relations; this review is a brief presentation and extension of notions and results from~\cite{ArendtElst2012}. Let $H$ be a complex Hilbert space.

Let $a$ be a sesquilinear form on some complex vector space, called $\dom(a)$. We say that $a$ is \emph{sectorial} of angle $\theta\in[0,\pi/2)$ if
\[
 a(u)\in\ol{\Sigma_\theta} \qquad(u\in\dom(a)),
\]
where $\Sigma_\theta:=\set{r\eul^{\imu\alpha}} {r>0,\ \alpha\in(-\theta,\theta)\cup\{0\}}$. (Note that our definition of `sectorial' is slightly more restrictive than
the one used in \cite[Chap.~V, \S\,3.10]{Kato1980}.)

Additionally, let $j\colon \dom(a)\to H$ be a linear operator; then the couple $(a,j)$ is called a \emph{form in} $H$. The form $(a,j)$ is called \emph{quasi-sectorial} of angle $\theta$ and with vertex $\gamma\in\R$ if
\[
 a(u) \in \gamma + \ol{\Sigma_\theta}\qquad (u\in\dom(a),\ \norm{j(u)}=1),
\]
or equivalently
\[
 a(u)-\gamma\norm{j(u)}^2\in\ol{\Sigma_\theta}\qquad (u\in\dom(a)).
\]

Let $(a,j)$ be a sectorial form of angle $\theta$ in $H$.
Then
\begin{equation}\label{eq-s-i-prod}
 \scpr xy_{a,j}:=(\Re a)(x,y)+\scpr {j(x)}{j(y)}_H\qquad (x,y\in\dom(a))
\end{equation}
defines a semi-inner product on $\dom(a)$, where $\Re a$ is defined by $(\Re a)(x,y):=\frac12(a(x,y)+\ol{a(y,x)})$.
Let $(V,q)$ be the completion of $(\dom(a),\scpr\cdot\cdot_{a,j})$, which means that $V$ is a Hilbert space and $q\colon\dom(a)\to V$ is a linear operator with dense range and $\scpr xy_{a,j}=\scpr{q(x)}{q(y)}_V$ for all $x,y\in\dom(a)$. Then the mappings $j$ and $a$ possess unique continuous `extensions' $\tj\colon V\to H$ and $\ta\colon V\times V\to\C$, satisfying $\tj(q(x))=j(x)$ and $\ta(q(x),q(y))=a(x,y)$ for all $x,y\in\dom(a)$. More generally, if $W$ is a Banach space and $L\colon (\dom(a),\norm\cdot_{a,j})\to W$ is a continuous linear mapping, then there exists a (unique) continuous linear operator $\tilde L\colon V\to W$ such that $\tilde L(q(x))=Lx$ for all $x\in\dom(a)$. (This property will be needed in the proof Theorem~\ref{thm-form-conv-fa}.)

Now suppose additionally that $(a,j)$ is \emph{densely defined}, i.e.~$\ran(j)$ is dense in~$H$. 
With the above preparations one then defines the \emph{m-sectorial operator $A$ associated with $(a,j)$} by
\begin{equation}\label{eq-ass-op-nc}
A:=\set{(x,y)\in H\times H} {\ex u\in V\colon \tj(u)=x,\ \ta(u,v)=\scpr{y}{\tj(v)}_H \ (v\in V)}.
\end{equation}
Note that $A$ is also the operator associated with the form $(\ta,\tj)$ in $H$.

To complete the picture -- still for the case of sectorial forms -- it remains to discuss the case when $\ran(j)$ is not dense in $H$. In this case the above procedure can be carried out with $H$ replaced by $H_1:=\ol{\ran(j)}$, and the formula corresponding to \eqref{eq-ass-op-nc} yields an m-sectorial operator $A_1$ in $H_1$. Then the formula \eqref{eq-ass-op-nc}, as it stands, yields the linear relation $A=A_1\oplus(\{0\}\times H_1^\perp)$ in $H$, which we call the \emph{linear relation associated with 
$(a,j)$}. The direct sum in this description is an orthogonal direct sum in $H\times H$. 

If $(a,j)$ is quasi-sectorial with vertex $\gamma$, then one obtains the linear relation $A$ associated with $(a,j)$ by first applying the previous procedure to the sectorial 
form~$(a_{-\gamma},j)$, $\dom(a_{-\gamma}):=\dom(a)$,
\[
 a_{-\gamma}(x,y):=a(x,y)-\gamma\scpr{j(x)}{j(y)}_H \qquad(x,y\in\dom(a)),
\]
with associated linear relation $A_{-\gamma}$, and then adding $\gamma I$ leads to 
\[
 A:=A_{-\gamma} + \gamma I:=\set{(x,y+\gamma x)}{(x,y)\in A_{-\gamma}}.
\]
In this case the semi-inner product \eqref{eq-s-i-prod} on $\dom(a)$ becomes
\begin{equation}\label{eq-s-i-prod-gamma}
 \scpr xy_{a_{-\gamma},j} = \Re a(x,y)+ (1-\gamma)\scpr {j(x)}{j(y)}_H\qquad (x,y\in\dom(a)).
\end{equation}
The definition of $A$ given above does not depend on the choice of the vertex $\gamma$.

As a consequence, if $(a,j)$ is quasi-sectorial and $\lambda\in\R$, then the linear relation 
associated with the form $a_\lambda$, $a_\lambda(x,y)=a(x,y)+ \lambda\scpr{j(x)}{j(y)}_H$ ($x,y\in\dom(a)$), is given by $A+\lambda I=\set{(x,y+\lambda x)}{(x,y)\in A}$.

The inverse of a linear relation $B\sse H\times H$ is the linear
relation $B\tmo:=\set{(y,x)}{(x,y)\in B}$. We point out that the resolvents 
$(\lambda+A_n)\tmo=(A_n+\lambda I)\tmo$ and $(\lambda+A)\tmo=(A+\lambda I)\tmo$ of $-A_n$ 
and~$-A$ at the point~$\lambda$, in Theorem~\ref{thm-form-conv-fa}, in fact are operators; see Remark~\ref{extended-cea-rems}(d).

\section{Proof of Theorem \ref{thm-form-conv-fa}}
\label{sec-proof}

The following result provides a key estimate needed in the proof
of Theorem~\ref{thm-form-conv-fa}.

\begin{proposition}\label{prop-cea-extended}
Let~$V$ be a complex Hilbert space, and let $a$ be a bounded coercive form on~$V$,
\[
  |{a(u,v)}|\le M\norm{u}_V\norm{v}_V, \quad \Re a(u)\ge\alpha\norm{u}_V^2
  \qquad (u,v\in V)
\]
for some $M\ge0$, $\alpha>0$.

Let $\wcV$ be a complex Hilbert space, $\ca$ a bounded coercive form on~$\wcV$, $J\in\cL(\wcV,V)$, and assume that there
exists $\theta\in[0,\frac\pi2)$ such that
\[
  \ca(v)-a(Jv) \in \ol{\Sigma_\theta} \qquad (v\in \wcV).
\]

Let $\eta\in V^*$ (the antidual space of $V$), and let $u\in V$,  $\cu\in \wcV$ be the unique elements such that
\[
  a(u,v) = \dupa\eta v_{V^*,V} \quad (v\in V), \qquad  \ca(\cu,v) = \dupa\eta{Jv}_{V^*,V} \quad (v\in \wcV).
\]

Then
\begin{equation}\label{eq-galerkin-conv-rate}
  \norm{u-J\cu}_V^2
  \le \inf_{v\in\wcV}\Bigl(\frac{M^2}{\alpha^2} \norm{u-Jv}_V^2 + \frac{c^2}{2\alpha}\bigl|\ca(v)-a(Jv)\bigr|\Bigr),
\end{equation}
where $c:=1+\tan\theta$.
\end{proposition}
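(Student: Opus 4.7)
The plan is to give a non-conforming Céa-type argument. Set $w:=u-J\cu\in V$; by coercivity it suffices to bound $\Re a(w,w)$ from above, since $\alpha\norm{w}_V^2\le\Re a(w,w)$. Fix an arbitrary test element $v\in\wcV$, and split
\[
 a(w,w)=a(w,u-Jv)+a(w,J(v-\cu)).
\]
The first summand is immediately controlled by continuity: $\bigl|\Re a(w,u-Jv)\bigr|\le M\norm{w}_V\norm{u-Jv}_V$.

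For the second summand I exploit both variational equations. Since $J(v-\cu)\in V$, the equation for $u$ gives $a(u,J(v-\cu))=\dupa{\eta}{J(v-\cu)}_{V^*,V}$, while the equation for $\cu$ gives $\ca(\cu,v-\cu)=\dupa{\eta}{J(v-\cu)}_{V^*,V}$. Hence $a(u,J(v-\cu))=\ca(\cu,v-\cu)$, so
\[
 a(w,J(v-\cu))=\ca(\cu,v-\cu)-a(J\cu,J(v-\cu)).
\]
Introducing the \emph{discrepancy form} $b(x,y):=\ca(x,y)-a(Jx,Jy)$ on $\wcV$ and adding and subtracting $\ca(v,v-\cu)$ and $a(Jv,J(v-\cu))$, a short computation yields
\[
 a(w,J(v-\cu))=b(v,v-\cu)-b(v-\cu,v-\cu).
\]
The sesquilinear form $b$ is sectorial of angle $\theta$ by hypothesis, so the standard sectorial Cauchy--Schwarz inequality (see e.g.\ Kato, Ch.~VI) applies:
\[
 |b(x,y)|\le(1+\tan\theta)\sqrt{\Re b(x,x)}\sqrt{\Re b(y,y)}=c\sqrt{\Re b(x)}\sqrt{\Re b(y)}.
\]
In particular, $\Re b(v-\cu,v-\cu)\ge0$ and $|\Re b(v,v-\cu)|\le c\sqrt{|b(v)|}\sqrt{\Re b(v-\cu)}$.

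Combining these estimates,
\[
 \alpha\norm{w}_V^2\le M\norm{w}_V\norm{u-Jv}_V+c\sqrt{|b(v)|}\sqrt{\Re b(v-\cu)}-\Re b(v-\cu).
\]
I finish with two elementary optimisations. Young's inequality with weight $\alpha$ gives $M\norm{w}_V\norm{u-Jv}_V\le\tfrac{\alpha}{2}\norm{w}_V^2+\tfrac{M^2}{2\alpha}\norm{u-Jv}_V^2$; and viewing $t\mapsto c\sqrt{|b(v)|}\,t-t^2$ at $t=\sqrt{\Re b(v-\cu)}$ as a quadratic in $t$, its maximum is $c^2|b(v)|/4$. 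Substituting, absorbing the $\tfrac{\alpha}{2}\norm{w}_V^2$ on the left, and taking the infimum over $v\in\wcV$ gives \eqref{eq-galerkin-conv-rate}.

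The step I expect to be most delicate is recognising that the `consistency error' $a(w,J(v-\cu))$ can be rewritten entirely in terms of the sectorial form $b$ (rather than merely the quadratic functional $v\mapsto\ca(v)-a(Jv)$), because only then does the Cauchy--Schwarz bound for sectorial forms—which is what produces the factor $c^2=(1+\tan\theta)^2$—become applicable. The rest is a routine Céa/Strang calculation.
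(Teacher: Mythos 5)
Your proof is correct and follows essentially the same route as the paper's: the same splitting $a(w,w)=a(w,u-Jv)+a(w,J(v-\cu))$, the same discrepancy form $b(x,y)=\ca(x,y)-a(Jx,Jy)$, Kato's Cauchy--Schwarz inequality for sectorial forms with constant $c=1+\tan\theta$, and a Young-type absorption. The only (immaterial) difference is that you expand the cross term as $b(v,v-\cu)-b(v-\cu)$ and maximise a quadratic in $\sqrt{\Re b(v-\cu)}$, whereas the paper expands it as $b(\cu,v)-b(\cu)$ and cancels $\Re b(\cu)$ via Young's inequality, which yields the marginally sharper bound with $\Re\bigl(\ca(v)-a(Jv)\bigr)$ in place of $\bigl|\ca(v)-a(Jv)\bigr|$.
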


\begin{proof} The existence of $u$ and $\cu$ is a consequence of the Lax--Milgram lemma.

We define a form $b$ on~$\wcV$ by
\[
  b(w,v) := \ca(w,v) - a(Jw,Jv).
\]
The assumptions imply that $b$ is sectorial of angle $\theta$; hence \cite[Chap.~VI, (1.15)]{Kato1980} implies
\begin{equation}\label{bn-est}
  |b(w,v)| \le c(\Re b(w))\toh (\Re b(v))\toh \qquad (w,v\in \wcV).
\end{equation}
For $v\in \wcV$ we compute 
\[
  a(u-J\cu,Jv) = \dupa\eta{Jv}_{V^*,V} - a(J\cu,Jv)
  = \ca(\cu,v) - a(J\cu,Jv) = b(\cu,v).
\]
Hence, for $v\in\wcV$ we obtain
\begin{align*}
\alpha\norm{u-J\cu}_V^2
 &\le \Re a\bigl(u-J\cu,(u-Jv)+J(v-\cu)\bigr) \\
 &  = \Re a(u-J\cu,u-Jv) + \Re b(\cu,v-\cu).
\end{align*}
Now we use~\eqref{bn-est}, the boundedness of~$a$ and twice Young's inequality ($ab\le\frac12\bigl(\gamma a^2+\rfrac1\gamma b^2)$ for all $a,b\ge0,\ \gamma>0$) to estimate
\begin{align*}
\alpha\norm{u-J\cu}_V^2
 &\le M\norm{u-J\cu}_V \norm{u-Jv}_V + c(\Re b(\cu))\toh(\Re b(v))\toh
      - \Re b(\cu) \\
 &\le \frac\alpha2 \norm{u-J\cu}_V^2 + \frac{M^2}{2\alpha} \norm{u-Jv}_V^2
      + \frac{c^2}4 \Re b(v).
\end{align*}
Reshuffling terms we conclude that
\begin{equation*}
  \norm{u-J\cu}_V^2
  \le \frac{M^2}{\alpha^2} \norm{u-Jv}_V^2 + \frac{c^2}{2\alpha} \Re\bigl(\ca(v)-a(Jv)\bigr). 
\qedhere
\end{equation*}
\end{proof}

\begin{remarks}\label{extended-cea-rems}
(a) If the space $\wcV$ is a subspace of~$V,\ 
J \from \wcV \embed V$ is the embedding and $\ca = a\restrict_{\wcV\times \wcV}$,
then Proposition~\ref{prop-cea-extended} reduces to C\'ea's lemma; see~\cite[p.\;365, Proposition~3.1]{Cea1964} (with a slightly different version of the resulting inequality). In this case \eqref{eq-galerkin-conv-rate} reduces to
\[
\norm{u-\cu}_V \le \frac M \alpha \inf_{v\in\wcV}\norm{u-v}_V,
\]
expressing that, up to a constant not depending on $\wcV$, the approximate solution $\cu$ is as close as possible to the solution $u$.
C\'ea's lemma is a tool for the Galerkin method of numerical analysis; see also 
Example~\ref{exple1}.

(b) 
We define the \emph{Lax--Milgram operator} associated with $a$ as the bounded linear operator 
$\cA\colon V\to V^*,\ u\mapsto a(u,\ccdotp)$.
As the form $a$ is coercive, the Lax--Milgram lemma implies that $\cA$ is an isomorphism.

(c) The following considerations serve to transform \eqref{eq-galerkin-conv-rate} into an inequality that will be used in the proof of Theorem~\ref{thm-form-conv-fa} given below.
In the setting of Proposition~\ref{prop-cea-extended}, let $\cA\in\cL(V,V^*)$ and $\check\cA\in\cL(\wcV,{\wcV}^*)$ be
the Lax--Milgram operators associated with $a$ and $\check a$, respectively.
We define the dual operator $J' \in \cL(V^*,{\wcV}^*)$ of~$J$ by
\[
  J'\eta := \eta\circ J\qquad(\eta\in V^*).
\]
Then for $\eta\in V^*$, the elements $u\in V$ and $\check u\in\wcV$ specified in Proposition~\ref{prop-cea-extended} are given by
\[
 u=\cA\tmo\eta,\quad \check u=\check\cA\tmo J'\eta.
\]
With this notation
\eqref{eq-galerkin-conv-rate} reads as
\begin{equation}\label{eq-galerkin-conv-rate-spelled-out}
   \norm{\cA\tmo\eta-J\check\cA\tmo J'\eta}_V^2
  \le \inf_{v\in\wcV}\Bigl(\frac{M^2}{\alpha^2}\norm{\cA\tmo\eta-Jv}_V^2 + \frac{c^2}{2\alpha}\bigl|\ca(v)-a(Jv)\bigr|\Bigr).
\end{equation}

(d) Let $(a,j)$ be a quasi-sectorial form with vertex $\gamma>0$ in a complex Hilbert space $H$, let $V$, $(\ta,\tj)$ and the associated linear relation $A$ be as described in Section~\ref{sec-prelims}, and let $\cA$ be the Lax--Milgram operator associated with the form $\ta$. Define $\tk\in \cL(H,V^*)$ by $\tk(y):=\scpr y{\tj(\cdot)}_H$ for $y\in H$. Then $A\tmo=\tj \cA\tmo \tk$. Note that this formula shows that $A\tmo$ is an operator.

Indeed, by \eqref{eq-ass-op-nc} one has $(x,y)\in A$ if and only if there exists 
$u\in V$ such that $\tj(u)=x$ and $\ta(u,v) = \scpr y{\tj(v)}$ for all $v\in V$. The 
latter property is equivalent to $\cA u = \tk(y)$, i.e.~$x=\tj(u)=\tj\cA\tmo \tk(y)$. (See also \cite[Proposition~2.1]{VogtVoigt2018} concerning this interplay between $A$ and the Lax--Milgram operator $\cA$.) 
\end{remarks}

\begin{proof}[Proof of Theorem~\ref{thm-form-conv-fa}]
Note that the hypotheses of the theorem imply that the forms $a_n$ are quasi-sectorial with the same vertex~$\gamma$ as~$a$ and with a common angle.

(i) In this main step of the proof we suppose that $\gamma>0$, and we show the assertion for 
$\lambda=0$ ($>-\gamma$), i.e.~we show that $A_n\tmo\to A\tmo$ ($n\to\infty$) strongly. 
We will use the representation of $A\tmo$ and of $A_n\tmo$
described in Remark~\ref{extended-cea-rems}(d).
The hypothesis $\gamma>0$ implies that $\Re a$ is a semi-inner product on~$\dom(a)$
that is equivalent to the semi-inner product $\scpr\scdot\scdot_{a,j}$ defined in~\eqref{eq-s-i-prod}.
Moreover $(a,j)$ is sectorial, and \eqref{eq-unif-est} implies that
$(a_n,j)$ is sectorial for all  $n\in\N$.

\setbox0=\vtop{%
$\begin{tikzcd}[row sep=0pt, column sep=0.63em]
\smash{\dom(a)} \arrow[r, "q"] & \<V\!\! \arrow[dr, "\tj" pos=0.4] & \\
&& \!H \\
\smash{\dom(a_n)}\<\vphantom V \arrow[swap, r, "q_n\vphantom t"] \arrow[hook, 
uu, "\id"] &
\!V_n\!\! \arrow[swap, ur, "\tj_n" pos=0.3] \arrow[uu, "J_n\strut"] &
\end{tikzcd}$}%
\parshape 6
0pt \textwidth
0pt \textwidth
0pt \textwidth
0pt \textwidth
0pt \textwidth
0pt 0.7\textwidth
Let $(V,q)$ denote the completion of $(\dom(a),\Re a)$.
Then there exist a unique $\tj\in\cL(V,H)$ and a unique bounded form
$\tilde a\colon V\times V\to\C$ such that $\tj\circ q=j$ and
$\tilde a(q(u),q(v))=a(u,v)$ for all $u,v\in \dom(a)$.
Analogously we define $V_n$, $q_n$, $\tj_n$ and $\tilde a_n$
corresponding to $a_n$ and $j_n:=j\restrict_{\dom(a_n)}$, for $n\in\N$.
It follows from~\eqref{eq-unif-est} that the embedding
$\dom(a_n)\hookrightarrow\dom(a)$ is continuous for all $n\in\N$,
and from the description of the completion in Section~\ref{sec-prelims} it follows that
\vadjust{\moveright0.73\textwidth \vbox to -\dp0{\vskip-0.5\baselineskip\box0}}%
there exists $J_n\in\cL(V_n,V)$ such
that $J_n\circ q_n=q\restrict_{\dom(a_n)}$. Then $\tj_n\circ
q_n=j\restrict_{\dom(a_n)}=\tj\circ q\restrict_{\dom(a_n)}=\tj\circ J_n\circ
q_n$; hence $\tj_n=\tj\circ J_n$ on $\ran(q_n)$, and by denseness on all of
$V_n$.

\parshape 0
Let $n\in\N$. Then for all $u\in\dom(a_n)$ we have
\[
  \tilde a_n(q_n(u))-\tilde a(J_nq_n(u)) = a_n(u)-a(u) \in \ol{\Sigma_\theta}.
\]
Since $\ran(q_n)$ is dense in $V_n$, it follows that
$\tilde a_n(v)-\tilde a(J_nv) \in \ol{\Sigma_\theta}$ for all $v\in V_n$.

For the application of Proposition~\ref{prop-cea-extended} we note that the convergence hypothesis of the theorem is equivalent to requiring that
\begin{equation}\label{eq-conv-prop}
 \inf_{v\in\dom(a_n)}\bigl(\norm{u-v}_a^2 + \bigl|a_n(v)-a(v)\bigr|\bigr)\to 0\qquad (n\to\infty),
\end{equation}
for all 
$u\in D$. As $D$ is a core for $a$, the convergence \eqref{eq-conv-prop} carries over to all $u\in\dom(a)$.
In view of the properties of the mappings $q$, $q_n$ and $J_n$ the convergence \eqref{eq-conv-prop} can be rewritten as
\[
 \inf_{v\in \ran(q_n)}\bigl(\norm{u-J_nv}_V^2 + \bigl|\tilde a_n(v) - \tilde a(J_nv)\bigr|\bigr)\to 0\qquad (n\to\infty),
\]
for all $u\in\ran(q)$. Then, using the inclusions $\ran(q_n)\sse V_n$ as well as the
denseness of $\ran(q)$ in $V$, one also obtains
\begin{equation}\label{eq-a-approximation}
 \inf_{v\in V_n}\bigl(\norm{u-J_nv}_V^2 + \bigl|\tilde a_n(v) - \tilde a(J_nv)\bigr|\bigr)\to 0\qquad (n\to\infty),
\end{equation}
for all $u\in V$.

Let $\tilde\cA \in\cL(V,V^*)$ and $\tilde\cA_n\in\cL(V_n,V_n^\tstar)$ ($n\in\N$) be
the Lax--Milgram operators associated with $\tilde a$ and $\tilde a_n$ ($n\in\N$),
respectively.
By Remark~\ref{extended-cea-rems}(d) we have $A\tmo = \tj\tilde\cA\tmo\tilde k$ and
$A_n\tmo = \tj_n\tilde\cA_n\tmo\tilde k_n$ for all $n\in\N$,
where $\tilde ky = \scpr{y}{\tj(\cdot)}_H$ and $\tilde k_ny = \scpr{y}{\tj_n(\cdot)}_H$
for all $y\in H$.
Combining \eqref{eq-a-approximation} with inequality \eqref{eq-galerkin-conv-rate-spelled-out} in Remark~\ref{extended-cea-rems}(c) we conclude that
$J_n\cA_n\tmo J'_n\to\cA\tmo$ strongly in $\cL(V^*,V)$, as $n\to\infty$.

Note that $\tj_n = \tj\circ J_n$ implies $\tilde k_n = J_n'\circ \tilde k$, for all $n\in\N$.
Therefore
\begin{equation}\label{eq-sr-conv}
  A_n\tmo = \tj(J_n\cA_n\tmo J_n')\tilde k \to \tj\cA\tmo\tilde k = A\tmo\qquad(n\to\infty)
\end{equation}
strongly in $\cL(H)$. 

(ii) For the general case let $\lambda>-\gamma$. As in Section~\ref{sec-prelims} we define the form $a_\lambda$ by
$\dom(a_\lambda):=\dom(a)$,
\[
 a_\lambda(x,y):=a(x,y)+\lambda\scpr{j(x)}{j(y)}_H\qquad(x,y\in\dom(a));
\]
then $a_\lambda$ has the vertex $\gamma+\lambda>0$. Defining $a_{n,\lambda}$ correspondingly for $n\in\N$, we apply step (i) to the form $a_\lambda$ and the sequence $(a_{n,\lambda})_{n\in\N}$. This yields $A_{n,\lambda}\tmo\to A_\lambda\tmo$ ($n\to\infty$) strongly, where 
$A_{n,\lambda}$, 
$A_\lambda$ are associated with the forms $a_{n,\lambda}$, $a_\lambda$. As $A_{n,\lambda}=A_n+\lambda I$ ($n\in\N$) and $A_\lambda=A+\lambda I$ by Section~\ref{sec-prelims}, we obtain the assertion of the theorem.
\end{proof}

\begin{remarks}\label{rems-orm-conv-fa}
(a) In the hypotheses of Theorem~\ref{thm-form-conv-fa}, the condition $u_n\to u$ in $\dom(a)$ implies that $a(u_n)\to a(u)$. Therefore `$a_n(u_n)-a(u_n) \to 0$ as $n\to\infty$' is equivalent to requiring `$a_n(u_n)\to a(u)$ as $n\to\infty$'.

(b) In the proof of Theorem~\ref{thm-form-conv-fa} it is shown that the convergence property required for all $u\in D$ is in fact satisfied for all $u\in\dom(a)$; see the
argument connected with the reformulation \eqref{eq-conv-prop} of this property.

(c) The hypotheses of the theorem express a kind of convergence $a_n\to a$.
In previous results on form convergence `from above' (see, e.g., \cite[Chap.~VIII, Theorem~3.6]{Kato1980} and \cite[Theorem~3.7]{ArendtElst2012}) a more restrictive hypothesis is used, namely that the set
\[
 D:=\bset{u\in \bigcup_{k\in\N}\bigcap_{n\ge k}\dom(a_n)} {a_n(u)\to a(u)\ (n\to\infty)}
\]
is a core for~$a$. In particular, the elements of $D$ belong to almost every 
$\dom(a_n)$. This kind of `monotonicity' for the domains of the forms is not required in Theorem~\ref{thm-form-conv-fa}.

Another important weakening in our hypotheses, with respect to the quoted sources, is that we do not require the forms $a_n$ to be densely defined. The inclusion of this feature in our result was motivated by the well-established Galerkin method; see Example~\ref{exple1}. The use of non-densely defined (symmetric) forms in the context of form convergence theorems was already promoted in~\cite{Simon1978}.
\end{remarks}

The following result is a generalisation of \cite[Theorem~3.8]{ArendtElst2012};
it is a simple by-product of our method of proof for 
Theorem~\ref{thm-form-conv-fa}.

\begin{theorem}\label{thm-form-conv-fa-j-comp}
Let the hypotheses be given as in Theorem~\ref{thm-form-conv-fa}. Assume additionally that the operator $j\colon (\dom(a),\scpr \cdot\cdot_{a_{-\gamma},j})\to H$ is compact, where 
$\scpr \cdot\cdot_{a_{-\gamma},j}$ is the semi-inner product 
\eqref{eq-s-i-prod-gamma}, with a vertex $\gamma$ of $a$.

Then the conclusion of strong resolvent convergence of the linear relations $A_n$ to $A$ can be strengthened to norm resolvent convergence, i.e.~$(\la+A_n)\tmo\to(\la+A)\tmo$ ($n\to\infty$) in 
$\cL(H)$ for all $\la>-\gamma$.
\end{theorem}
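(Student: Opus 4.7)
My plan is to enhance the proof of Theorem~\ref{thm-form-conv-fa} by exploiting the compactness hypothesis. Recall that after the \emph{wlog} reduction $\gamma=1$, the proof of Theorem~\ref{thm-form-conv-fa} produced the decomposition
\[
  A_n\tmo - A\tmo \;=\; \tj\+S_n\tk,
  \qquad S_n \;:=\; J_n\tilde\cA_n\tmo J_n' - \tilde\cA\tmo,
\]
where $S_n\to 0$ strongly in $\cL(V^*,V)$; by the uniform boundedness principle, $\sup_n\|S_n\|<\infty$. The goal is to use the added compactness to upgrade the strong convergence of $\tj\+S_n\tk$ to convergence in operator norm.

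The first step is to verify that the compactness transfers to the completion, i.e.\ that $\tj\colon V\to H$ is compact. Since $q\colon\dom(a)\to V$ is an isometry onto a dense subspace with $\tj\circ q=j$, any bounded sequence in~$V$ can be approximated in $V$-norm by the $q$-image of a bounded sequence in $(\dom(a),\scpr\cdot\cdot_{a,j})$, and the hypothesis on~$j$ then produces a convergent subsequence of the corresponding $\tj$-images. Schauder's theorem, applied in the antidual pairing $\dupa{\tk\+ y}{v}_{V^*,V}=\scpr{y}{\tj v}_H$ that identifies $\tk$ with the adjoint of~$\tj$, yields compactness of $\tk\in\cL(H,V^*)$ as well.

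With $\tk$ compact and $(S_n)$ uniformly bounded and strongly null, the set $\tk(B_H)$ is relatively compact in~$V^*$, so the pointwise convergence $S_n\to 0$ is uniform on it. Hence $\|S_n\tk\|_{\cL(H,V)}\to 0$, and composing with the bounded operator~$\tj$ gives $\|A_n\tmo-A\tmo\|_{\cL(H)}\to 0$. Propagating norm convergence from $\lambda=0$ to arbitrary $\lambda>-\gamma$ is then standard, using the second resolvent identity and the uniform bound on $\|(\lambda+A_n)\tmo\|$ coming from the common vertex~$\gamma$ and angle shared by the forms $a_n$.

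I do not anticipate a serious obstacle: once compactness of~$\tj$ (and hence of~$\tk$) is in place, the remainder is the textbook argument that a strongly null uniformly bounded family becomes norm null when composed with a compact operator.
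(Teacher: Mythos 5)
Your argument is correct and coincides with the paper's own proof: both transfer compactness from $j$ to $\tj$ via the density of $q(\dom(a))$ in $V$, invoke Schauder's theorem to get compactness of $\tk$, and then upgrade the strong convergence $J_n\tilde\cA_n\tmo J_n'\to\tilde\cA\tmo$ to norm convergence of $A_n\tmo-A\tmo=\tj\+S_n\tk$ by composing a uniformly bounded, strongly null family with the compact operator $\tk$. The final propagation to all $\la>-\gamma$ is likewise treated as standard in the paper.
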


\begin{proof}
As in the proof of Theorem~\ref{thm-form-conv-fa} it is sufficient to consider the case 
when $\gamma>0$ is a vertex of $a$ and $\lambda=0$, and we adopt the notation used in that proof.

The properties of the completion of $\dom(a)$ imply that the image under~$q$ of the open unit ball of $\dom(a)$ is dense in the open unit ball of~$V$. Therefore the compactness of~$j$ implies that~$\tj$ is a compact operator as well. Now we go to the last two paragraphs of step (i) in the proof of Theorem~\ref{thm-form-conv-fa} and observe that, by Schauder's theorem on the adjoint of compact operators, the operator~$\tk$ is compact. Therefore the strong convergence $J_n\cA_n\tmo J'_n\to\cA\tmo$ together with \eqref{eq-sr-conv} implies that 
$A_n\tmo\to A\tmo$ in~$\cL(H)$.
\end{proof}

\section{Degenerate strongly continuous semigroups}
\label{sec-conv-dscsg}

Let $X$ be a Banach space, and let $T\colon[0,\infty)\to\cL(X)$ be a one-parameter semigroup on~$X$; note that then
$P:=T(0)$ is a projection. We 
call $T$ a \emph{degenerate strongly continuous semigroup} if $T(0)=\slim_{t\to0\rlim}T(t)$.
A degenerate strongly continuous semigroup $T$ is the direct sum of a $C_0$-semigroup $T_1$ on $X_1:=P(X)$ and the semigroup on $X_0:=(I-P)(X)$ that is identically zero; in fact~$T_1$ is the restriction of~$T$ to~$X_1$. Let~$A_1$ be the generator of the $C_0$-semigroup~$T_1$. Then it is meaningful to call the linear relation 
$A:= A_1\oplus(\{0_{X_0}\}\times X_0)=\set{(x,A_1x+y)}{x\in\dom(A_1),\ y\in X_0}$ 
the \emph{generator} of the degenerate strongly continuous semigroup~$T$.
We refer to \cite{Arendt2001} for the introduction and investigation of degenerate semigroups. Arendt does not define the generator for degenerate strongly continuous semigroups, but rather deals with the concept of pseudo-resolvents, which are the resolvents of linear relations~$A$ as above.

Let $H$ be a complex Hilbert space, and let $(a,j)$ be a quasi-sectorial form in $H$. We define $H_1:=\ol{\ran(j)}$ and $H_0:=H_1^\perp$. In Section~\ref{sec-prelims} it was described that then the linear relation $A$ associated with $(a,j)$ is of the form $A_1\oplus(\{0\}\times H_0)$, where $A_1$ is a quasi-m-sectorial operator, and as such the operator $-A_1$ is the generator of a $C_0$-semigroup on $H_1$. Hence, in the terms described above, the linear relation $-A$ is the generator of a degenerate strongly semigroup $T$, the direct sum of the $C_0$-semigroup $T_1$ on $H_1$ generated by $-A_1$ and the zero-semigroup on~$H_0$. 

\begin{theorem}\label{thm-norm-res-con}
Let the hypotheses be given as in Theorem~\ref{thm-form-conv-fa}. Denote by~$T$ the degenerate strongly continuous semigroup generated by the linear relation~$-A$, and for $n\in\N$ denote by~$T_n$ the degenerate strongly continuous semigroup generated by~$-A_n$.
Then 
\begin{equation}\label{eq-conv-sg}
 T_n(t)x \to T(t)x\qquad(n\to\infty)
\end{equation}
uniformly on compact subsets of \,$[0,\infty)$, for all $x\in H$. 
\end{theorem}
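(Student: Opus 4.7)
The plan is to combine the strong resolvent convergence from Theorem~\ref{thm-form-conv-fa} with the classical Trotter--Kato theorem for $C_0$-semigroups, after reducing to that setting. Without loss of generality assume $\gamma=1$, set $H_1:=\ol{\ran(j)}$, $H_0:=H_1^\perp$, and let $P$ be the orthogonal projection onto $H_1$; analogously define $H_{n,1}:=\ol{\ran(j\restrict_{\dom(a_n)})}$ and $P_n$ for $n\in\N$. The inclusion $\dom(a_n)\sse\dom(a)$ yields $H_{n,1}\sse H_1$, and the forms $(a_n,j\restrict_{\dom(a_n)})$ are quasi-sectorial with the common vertex $\gamma$ and a common angle, giving the uniform bound $\norm{T_n(t)}\le e^{-\gamma t}$ for all $t\ge0$ and $n\in\N$. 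Note that $T(0)=P$ and $T_n(0)=P_n$.

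The key step --- which I expect to be the main obstacle --- is to prove that $P_n\to P$ strongly as $n\to\infty$; this is the only place where the approximation hypothesis of Theorem~\ref{thm-form-conv-fa} enters non-trivially. For $x\in H_0$ one has $P_nx=0$ since $H_{n,1}\sse H_1$. For $x=j(u)$ with $u\in\dom(a)$, the approximation hypothesis (valid on all of $\dom(a)$ by Remark~\ref{rems-orm-conv-fa}(b)) provides $u_n\in\dom(a_n)$ with $u_n\to u$ in $\dom(a)$; since $j\colon(\dom(a),\scpr\cdot\cdot_{a,j})\to H$ is continuous (a direct consequence of quasi-sectoriality), this gives $j(u_n)\to j(u)=x$ in $H$, and $j(u_n)\in H_{n,1}$ forces $P_nx\to x$. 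Density of $\ran(j)$ in $H_1$ together with $\norm{P_n}\le1$ extends this to all $x\in H_1$, whence $P_n\to P$ strongly on $H$.

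To assemble the proof, I would introduce the auxiliary operators
\[
S_n(t):=T_n(t)+(I-P_n),\qquad S(t):=T(t)+(I-P)\qquad(t\ge0).
\]
Using $T_n(t)P_n=T_n(t)=P_nT_n(t)$ and its analogue for $T$, one checks that $S_n$ and $S$ are $C_0$-semigroups on $H$, uniformly bounded by $e^{-\gamma t}+1$, with generators $G_n$ and $G$ whose resolvents satisfy
\[
(\la-G_n)\tmo=(\la+A_n)\tmo+\la\tmo(I-P_n)
\]
for $\la>\max(0,-\gamma)$, and analogously for $G$. Theorem~\ref{thm-form-conv-fa} combined with the strong convergence $P_n\to P$ then yields strong resolvent convergence $(\la-G_n)\tmo\to(\la-G)\tmo$, so the classical Trotter--Kato theorem delivers $S_n(t)x\to S(t)x$ uniformly on compact subsets of $[0,\infty)$, for every $x\in H$. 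The conclusion~\eqref{eq-conv-sg} then follows from the identity $T_n(t)-T(t)=\bigl(S_n(t)-S(t)\bigr)+(P_n-P)$ and the $t$-independent strong convergence $P_n\to P$.
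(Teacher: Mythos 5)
Your proof is correct, but it takes a genuinely different route from the paper. The paper's proof is two lines: it invokes Arendt's approximation theorem for \emph{degenerate} strongly continuous semigroups \cite[Theorem~4.2]{Arendt2001}, which converts the strong resolvent convergence of Theorem~\ref{thm-form-conv-fa} directly into \eqref{eq-conv-sg} for all $x\in\ran(T(0))=\ol{\ran(j)}$, and then observes that on $\ran(j)^\perp$ all the semigroups $T$ and $T_n$ vanish (since $\ran(j)^\perp\sse\ran(j\restrict_{\dom(a_n)})^\perp$), so nothing further is needed. You instead reduce to the \emph{non-degenerate} setting by passing to the genuine $C_0$-semigroups $S_n(t)=T_n(t)+(I-P_n)$ and applying the classical Trotter--Kato theorem; the price is that you must prove the strong convergence $P_n\to P$ of the initial projections yourself, and your argument for this is the real content of your proof: it correctly uses the approximation hypothesis (extended to all of $\dom(a)$ via Remark~\ref{rems-orm-conv-fa}(b)) together with the contractivity $\norm{j(u)}_H\le\norm{u}_{a,j}$ to get $\dist(x,H_{n,1})\to0$ for $x\in\ran(j)$, and then density plus $\norm{P_n}\le1$. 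The algebra $T_n(t)(I-P_n)=(I-P_n)T_n(t)=0$, the resolvent identity $(\la-G_n)\tmo=(\la+A_n)\tmo+\la\tmo(I-P_n)$, and the final decomposition $T_n(t)-T(t)=(S_n(t)-S(t))+(P_n-P)$ all check out. What each approach buys: the paper's is shorter and needs only the resolvent convergence as input (the projection convergence is absorbed into Arendt's theorem), while yours is self-contained modulo the standard Trotter--Kato theorem and makes explicit, as a by-product, that $T_n(0)\to T(0)$ strongly follows already from the approximation hypothesis --- in effect you re-derive the special case of Arendt's degenerate approximation theorem that is needed here. Minor blemish: you declare ``without loss of generality $\gamma=1$'' but then carry a general $\gamma$ through the bounds $e^{-\gamma t}$ and $\la>\max(0,-\gamma)$; either normalise or don't, but this is cosmetic.
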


\begin{proof}
We refer to \cite[Theorem~4.2]{Arendt2001} for the result that the strong resolvent convergence stated in Theorem~\ref{thm-form-conv-fa} implies the convergence 
\eqref{eq-conv-sg} for all $x\in\ran(T(0))=\ol{\ran(j)}$. However, on $\ran(j)^\perp$, the semigroup $T$ and the semigroups $T_n$ act as the zero operator (note that $\ran(j)^\perp\sse\ran(j\restrict_{\dom(a_n)})^\perp$); hence the convergence \eqref{eq-conv-sg} on 
$\ran(j)^\perp$ is trivial.
\end{proof}

\section{Examples}
\label{sec-examples}

\begin{example}\label{exple1}
Let $H$ be a complex Hilbert space, and let $(a,j)$
be an embedded sectorial form in $H$. Here, `embedded' means that $\dom(a)\sse H$ and that 
$j\colon\dom(a)\embed H$ is the embedding. Then
\[
 \scpr xy_{a,j}:=a(x,y)+\scpr xy_H\qquad(x,y\in \dom(a))
\]
defines a scalar product on $V:=\dom(a)$.

Let $(V_n)_{n\in\N}$ be a sequence of subspaces of $V$, and define $a_n$ as the restriction of $a$ to $V_n$. Suppose that for all $x\in V$ one has 
$\dist_{\norm\cdot_{a,j}}(x,V_n)\to0$ as $n\to\infty$. Then Theorem~\ref{thm-form-conv-fa} implies that the linear relations $A_n$ associated with $a_n$ converge to the linear relation $A$ associated with $a$, in the strong resolvent sense.

We add three comments: 1.~It is not difficult to construct cases in which the spaces $V_n$ are such that $V_n\cap V_m=\{0\}$ for all $n\ne m$; in particular, the condition mentioned in Remark~\ref{rems-orm-conv-fa}(c) is not satisfied.
2.~For the proof of  this application of Theorem~\ref{thm-form-conv-fa} one would need Proposition~\ref{prop-cea-extended} only in the simplified form of C\'ea's lemma alluded to in Remark~\ref{extended-cea-rems}(a).
3.~If all the spaces $V_n$ are finite-dimensional, then the assertion in the example corresponds to the Galerkin method of numerical analysis.
\end{example}

\begin{example}\label{exple2}
 Let $\Omega\sse\R^n$ be an open set, and let $(\Omega_k)_{k\in\N}$ be a sequence of open subsets of $\Omega$ with the property
that for each compact set $K\sse\Omega$ there exists $k_K\in\N$ such that $K\sse\Omega_k$ for all $k\ge k_K$.

Let $a$ be the classical Dirichlet form on $\Cci(\Omega)$,
\[
 a(f,g)=\int\nabla f\cdot\ol{\nabla g}\di x\qquad (f,g\in\Cci(\Omega)),
\]
let $a_k$ be the restriction of $a$ to $\dom(a_k):=\Cci(\Omega_k)$, and let $A$ and $A_k$ ($k\in\N$) be the self-adjoint linear relation associated with
$a$ and $a_k$ ($k\in\N$), respectively. This means that the operator $A$ is the negative Dirichlet Laplacian in $L_2(\Omega)$, whereas for $k\in\N$, $A_k$ is the negative 
Dirichlet Laplacian in $L_2(\Omega_k)$ supplemented by $\{0\}\times L_2(\Omega\setminus\Omega_k)$ to a self-adjoint linear relation in $L_2(\Omega)$. (We refer to \cite[Section~5]{Arens1961} for the first appearance and the analysis of self-adjoint linear relations in Hilbert spaces.)

Then Theorem~\ref{thm-form-conv-fa} (or Example~\ref{exple1}) implies that $A_k\to A$ in the strong resolvent sense. If additionally $\Omega$ is bounded, then Theorem~\ref{thm-form-conv-fa-j-comp} in tandem with the Rellich--Kondrachov theorem implies that $A_k\to A$ in the norm resolvent sense.

Note that, in general, in the above context one cannot expect to 
find a decreasing subsequence of $(a_k)_k$. However, the condition mentioned in Remark~\ref{rems-orm-conv-fa}(c) is satisfied. We refer to \cite[Section~6]{Arendt2001} for a treatment of semigroup convergence in a similar context, 
where the approximating sets $\Omega_k$ need not be subsets of $\Omega$, but $\Omega$ is required to satisfy a regularity property.
It is not difficult to see that in the proof given in \cite{Arendt2001} this regularity property can be removed if all the $\Omega_k$ are subsets of~$\Omega$. 
\end{example}

{\small
\frenchspacing

}

\bigskip

\noindent
Hendrik Vogt\\
Fachbereich Mathematik, Universit\"at Bremen\\ 
Postfach 330 440\\
28359 Bremen, Germany\\
{\tt 
hendrik.vo\rlap{\textcolor{white}{hugo@egon}}gt@uni-\rlap{\textcolor{white}{%
hannover}}bremen.de}\\[3ex]
J\"urgen Voigt\\
Technische Universit\"at Dresden, Fakult\"at Mathematik\\
01062 Dresden, Germany\\
{\tt 
juer\rlap{\textcolor{white}{xxxxx}}gen.vo\rlap{\textcolor{white}{yyyyyyyyyy}}%
igt@tu-dr\rlap{\textcolor{white}{%
zzzzzzzzz}}esden.de}

\end{document}